\def\input@path{{"/Users/russw/Documents/Research/mypapers/A short proof of the Hilton-Milner theorem/"}}
\theoremstyle{plain}
\newtheorem{thm}{\protect\theoremname}
\theoremstyle{plain}
\newtheorem{lem}[thm]{\protect\lemmaname}
\theoremstyle{remark}
\newtheorem{rem}[thm]{\protect\remarkname}
\providecommand{\lemmaname}{Lemma}
\providecommand{\remarkname}{Remark}
\providecommand{\theoremname}{Theorem}
\begin{document}
\global\long\def\link{\operatorname{link}}%

\global\long\def\del{\operatorname{del}}%

\global\long\def\cone{\operatorname{Cone}}%

\global\long\def\depth{\operatorname{depth}}%

\global\long\def\shift{\operatorname{Shift}}%

\global\long\def\symdiff{\ominus}%

\global\long\def\shiftext{\shift^{\wedge}}%

\global\long\def\extalg{\bigwedge}%

\global\long\def\ff{\mathbb{F}}%

\global\long\def\bdry{\partial}%

\renewcommand{\proofname}{\hskip-\labelsep\spacefactor3000 }
\title{A short proof of the Hilton-Milner Theorem}
\author{Denys Bulavka and Russ Woodroofe}
\thanks{Work of the first author is partially supported by the Israel Science
Foundation grant ISF-2480/20 and the AARMS postdoctoral fellowship.
Work of the second author is supported in part by the Slovenian Research
Agency (research program P1-0285 and research projects J1-9108, J1-2451,
J1-3003, and J1-50000).}
\address{Einstein Institute of Mathematics, Hebrew University, Jerusalem 91904,
Israel}
\curraddr{Department of Mathematics \& Statistics, Dalhousie University, 6297
Castine Way, PO BOX 15000, Halifax, NS, Canada, B3H 4R2}
\email{denys.bulavka@dal.ca}
\urladdr{\url{https://kam.mff.cuni.cz/~dbulavka/}}
\address{Univerza na Primorskem, Glagoljaška 8, 6000 Koper, Slovenia}
\email{russ.woodroofe@famnit.upr.si}
\urladdr{\url{https://osebje.famnit.upr.si/~russ.woodroofe/}}
\begin{abstract}
We give a short and relatively elementary proof of the Hilton-Milner
Theorem.
\end{abstract}

\maketitle
The Hilton-Milner Theorem gives the maximum size of a uniform pairwise-intersecting
family of sets that do not share a common element.
\begin{thm}[Hilton and Milner 1967 \cite{Hilton/Milner:1967}]
\label{thm:HM}Let $k\leq n/2$. If $\mathcal{F}$ is a family of
pairwise-intersecting $k$-element subsets of $[n]$, where $\bigcap_{F\in\mathcal{F}}F=\emptyset$,
then $\left|\mathcal{F}\right|\leq{n-1 \choose k-1}-{n-1-k \choose k-1}+1$.
\end{thm}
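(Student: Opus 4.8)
The plan is to reduce $\mathcal{F}$ to a \emph{shifted} family and then prove the bound for shifted families by induction on $n$. Recall that the shift $\shift_{ij}$ with $i<j$ replaces each $F\in\mathcal{F}$ having $j\in F$ and $i\notin F$ by $(F\setminus\{j\})\cup\{i\}$, provided that set is not already present; this operation preserves $|\mathcal{F}|$ and the pairwise-intersecting property. For a shifted family the only element that can belong to every member is $1$, so the hypothesis $\bigcap_{F\in\mathcal{F}}F=\emptyset$ becomes the concrete statement that at least one member avoids $1$. One delicate point, which I flag immediately as the crux, is that shifting can turn a family with empty common intersection into a star; so the reduction must be done with care rather than by shifting blindly, a subtlety I return to below.

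Assuming $\mathcal{F}$ shifted with some set avoiding $1$, the first thing I would record is that the smallest such $k$-set, namely $B=\{2,3,\dots,k+1\}$, lies in $\mathcal{F}$. Now split $\mathcal{F}$ at the element $1$: let $\mathcal{F}_{1}=\link_{1}\mathcal{F}$ consist of the $(k-1)$-sets $F\setminus\{1\}$ with $1\in F\in\mathcal{F}$, and let $\mathcal{A}=\del_{1}\mathcal{F}$ consist of the members of $\mathcal{F}$ that avoid $1$, so that $|\mathcal{F}|=|\mathcal{F}_{1}|+|\mathcal{A}|$. Since $B\in\mathcal{F}$ and $1\notin B$, every member of $\mathcal{F}_{1}$ meets $B$; as there are $\binom{n-1}{k-1}$ available $(k-1)$-sets on $\{2,\dots,n\}$ and $\binom{n-1-k}{k-1}$ of them miss $B$, this yields $|\mathcal{F}_{1}|\le\binom{n-1}{k-1}-\binom{n-1-k}{k-1}$, exactly the first two terms of the claimed bound. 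It remains to account for the slack $+1$ coming from $\mathcal{A}$.

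The structural facts I would use are that $\mathcal{A}$ is itself intersecting, that $\mathcal{F}_{1}$ and $\mathcal{A}$ are cross-intersecting, and that any two members of $\mathcal{A}$ meet in at least two points: indeed, for $A\in\mathcal{A}$ and $j\in A$, shiftedness puts $(A\setminus\{j\})\cup\{1\}$ in $\mathcal{F}$, and intersecting it with another $A'\in\mathcal{A}$ (which avoids $1$) forces $A'\cap(A\setminus\{j\})\neq\emptyset$ for every $j$. The heart of the proof is then a trade-off: enlarging $\mathcal{A}$ past the single set $B$ must shrink $\mathcal{F}_{1}$ by at least as much, since $\mathcal{F}_{1}$ has to cross-intersect all of $\mathcal{A}$. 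I would package this as a cross-intersecting inequality of Hilton--Milner type, prove it by induction on $n$ by peeling a largest vertex through $\link_{n}$ and $\del_{n}$ and invoking the inductive bound on $[n-1]$, and check the base cases $n=2k$ and small $k$ directly. The main obstacle is exactly the shifting/no-common-element tension noted above: the remedy is to prove the cross-intersecting inequality in a form that is invariant under shifting and implies the theorem, and the delicate part is pinning down the extremal case so that the available slack is precisely the single extra set $B$.
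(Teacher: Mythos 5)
Your overall route coincides with the paper's: reduce to a shifted family, split at the element $1$ into the link $\mathcal{F}_{1}$ and the deletion $\mathcal{A}$, and bound $\left|\mathcal{F}_{1}\right|+\left|\mathcal{A}\right|$ by a cross-intersecting inequality of Hilton--Milner type proved by induction on $n$ via the link and deletion of the largest vertex. The inequality you sketch is, up to swapping the names of the two families, exactly Theorem~\ref{thm:MainTechnical}; the hypothesis $\bdry\mathcal{B}\subseteq\mathcal{A}$ there is precisely the observation you make, that shiftedness puts $\left(A\setminus j\right)\cup1$ into $\mathcal{F}$ for each $A\in\mathcal{A}$ and $j\in A$. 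So that part of the plan is sound, even though you leave the induction's case analysis open (in particular the case where no member of the $k$-uniform family contains $n$, which is where the shadow hypothesis earns its keep).

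The genuine gap is the step you flag as the crux and then do not close: producing a \emph{shifted} family, of the same size, that still has empty common intersection. Your proposed remedy --- prove the cross-intersecting inequality in a shift-invariant form --- does not address this, because that inequality is already invariant under shifting the pair $(\mathcal{F}_{1},\mathcal{A})$; the difficulty sits upstream, in guaranteeing that after shifting $\mathcal{F}$ some member still avoids $1$. If shifting turns $\mathcal{F}$ into a star, your decomposition yields $\mathcal{A}=\emptyset$ and only the Erd\H{o}s--Ko--Rado bound $\binom{n-1}{k-1}$, which is weaker than the claimed one. Closing this requires a real argument, which the paper supplies as Lemma~\ref{lem:Frankl-Furedi}: stop at the first shift $\shift_{i_{0}\leftarrow j_{0}}$ that would create a star, so that beforehand every set contains $i_{0}$ or $j_{0}$; relabel these to $1$ and $2$ and continue shifting over $3\leq i<j$ to force $\left\{ 1,3,\dots,k+1\right\} $ and $\left\{ 2,3,\dots,k+1\right\} $ into the family; harmlessly add all $k$-sets containing $\left\{ 1,2\right\} $ so that the family contains $\bdry\left\{ 1,\dots,k+1\right\} $; and note that this subfamily has empty intersection and survives all further shifts. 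Without this lemma or an equivalent, your argument proves the theorem only for families that happen to shift to a non-star, i.e.\ essentially only the shifted case.
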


In the current article, we will show Theorem~\ref{thm:HM} to follow
quickly from the following theorem, which we believe to be of some
independent interest. Two set systems $\mathcal{A}$ and $\mathcal{B}$
are \emph{cross-intersecting} if for every $A\in\mathcal{A},B\in\mathcal{B}$,
the intersection $A\cap B$ is nonempty. The \emph{shadow} $\bdry B$
of a $k$-element set $B$ consists of all the $(k-1)$-element subsets
of $B$; the shadow of a uniform set family is the union of the shadows
of its constituent sets, thus consists of all $(k-1)$-element subsets
of constituent sets.
\begin{thm}
\label{thm:MainTechnical}Let $2k-1\leq n$, let $\mathcal{A}$ be
a family of $(k-1)$-element subsets of $[n]$, and let $\mathcal{B}$
be a family of $k$-element subsets of $[n]$. If $\mathcal{A},\mathcal{B}$
are cross-intersecting, $\mathcal{B}$ is nonempty, and $\bdry\mathcal{B}\subseteq\mathcal{A}$,
then 
\[
\left|\mathcal{A}\right|+\left|\mathcal{B}\right|\leq{n \choose k-1}-{n-k \choose k-1}+1.
\]
\end{thm}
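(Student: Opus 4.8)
The plan is to begin by normalizing the pair $(\mathcal{A},\mathcal{B})$. Replacing $\mathcal{A}$ by the family of \emph{all} $(k-1)$-sets that meet every member of $\mathcal{B}$ can only enlarge $\mathcal{A}$; this operation preserves cross-intersection by definition, and it preserves $\partial\mathcal{B}\subseteq\mathcal{A}$ because every original member of $\mathcal{A}$ already meets every $B\in\mathcal{B}$ and so survives into the enlarged family. Hence I may assume $\mathcal{A}$ is this maximal family, whose complement inside $\binom{[n]}{k-1}$ is exactly $\mathcal{D}:=\{C\in\binom{[n]}{k-1}: C\cap B=\emptyset\text{ for some }B\in\mathcal{B}\}$. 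Then $|\mathcal{A}|=\binom{n}{k-1}-|\mathcal{D}|$, and the theorem becomes the single inequality $|\mathcal{D}|\ge|\mathcal{B}|+\binom{n-k}{k-1}-1$. I would also record a structural consequence of the hypotheses: if a $(k-1)$-subset of some $B\in\mathcal{B}$ were disjoint from some $B'\in\mathcal{B}$, it would lie in $\partial\mathcal{B}\subseteq\mathcal{A}$ yet miss $B'$, contradicting cross-intersection; thus $\mathcal{B}$ is \emph{$2$-intersecting}, i.e. $|B\cap B'|\ge 2$ for all $B,B'\in\mathcal{B}$.

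Passing to complements, set $\mathcal{B}^{*}=\{[n]\setminus B: B\in\mathcal{B}\}$, a family of $(n-k)$-sets with $|\mathcal{B}^{*}|=|\mathcal{B}|$. A $(k-1)$-set lies in $\mathcal{D}$ exactly when it is contained in some member of $\mathcal{B}^{*}$, so $\mathcal{D}$ is the iterated lower shadow of $\mathcal{B}^{*}$ down to level $k-1$; and $\binom{n-k}{k-1}$ is just the number of $(k-1)$-subsets of a single $(n-k)$-set. The target thus reads: beyond one member, each member of $\mathcal{B}^{*}$ contributes on average at least one new $(k-1)$-face to the shadow. For $n$ large this can be read off from the Kruskal--Katona theorem: writing $|\mathcal{B}^{*}|=\binom{x}{n-k}$, Kruskal--Katona gives $|\mathcal{D}|\ge\binom{x}{k-1}$, so it suffices to verify the elementary binomial inequality $\binom{x}{k-1}-\binom{x}{n-k}\ge\binom{n-k}{k-1}-1$, which holds in the range $n-k\le x\le n-2$ (the difference is nondecreasing up to $x=n-3$ and equals the right-hand side at $x=n-k$).

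The catch is that this shortcut needs $x\le n-2$, i.e. the cardinality bound $|\mathcal{B}|\le\binom{n-2}{k-2}$ for $2$-intersecting families, which can fail in the small-$n$ part of the allowed range $2k-1\le n$. To get a uniform argument I would instead run combinatorial compression on $\mathcal{B}^{*}$. Shifting preserves $|\mathcal{B}^{*}|$ and the $2$-intersecting property and does not increase the size of the shadow $\mathcal{D}$, so it suffices to prove $|\mathcal{D}|\ge|\mathcal{B}|+\binom{n-k}{k-1}-1$ under the extra assumption that $\mathcal{B}^{*}$ (equivalently $\mathcal{B}$) is shifted. For a shifted family the obstruction one worries about --- a ``sunflower'' of members whose parts outside a common core are pairwise disjoint, so that no single member owns a private shadow face --- cannot occur, and I expect to close the argument by induction on $|\mathcal{B}|$: peel off the member that is last in colexicographic order and use shiftedness together with $2$-intersection to exhibit a $(k-1)$-subset of it contained in no other member, so that its removal drops $|\mathcal{D}|$ by at least one while keeping the family $2$-intersecting. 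I expect this inductive peeling step in the shifted case --- guaranteeing a ``private'' shadow face and controlling the shadow-monotonicity bookkeeping uniformly for all $n\ge 2k-1$ --- to be the main obstacle; the base case $|\mathcal{B}|=1$ gives equality, so the whole estimate is tight at single sets, matching the extremal configuration in which $\mathcal{B}$ is a single $k$-set and $\mathcal{A}$ is every $(k-1)$-set meeting it.
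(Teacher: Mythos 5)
Your opening reductions are sound and genuinely different in flavor from the paper's argument: passing to the maximal $\mathcal{A}$, observing that the hypotheses force $\mathcal{B}$ to be $2$-intersecting, and restating the theorem as the shadow inequality $|\mathcal{D}|\geq|\mathcal{B}|+\binom{n-k}{k-1}-1$ for $\mathcal{D}=\partial^{(k-1)}\mathcal{B}^{*}$ are all correct. But the proof is not complete, and the step you flag as ``the main obstacle'' is not merely unfinished --- as designed, it is false. Consider $n=6$, $k=3$, $\mathcal{B}=\{\{x,5,6\}:x\in[4]\}$ (a legitimate instance: $\mathcal{A}$ maximal is the $9$ pairs meeting $\{5,6\}$, and $|\mathcal{A}|+|\mathcal{B}|=13$ attains the bound). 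Then $\mathcal{B}^{*}=\binom{[4]}{3}$, which is already shifted and $2$-intersecting, its colexicographically last member is $\{2,3,4\}$, and each of its $2$-subsets $\{2,3\},\{2,4\},\{3,4\}$ is contained in another member of $\mathcal{B}^{*}$. So the colex-last member owns no private shadow face, and indeed deleting it does not shrink $\mathcal{D}$ at all ($|\mathcal{D}|=6$ before and after). Since this is an equality case of the target inequality, no induction whose step asserts ``each additional member contributes at least one new shadow face'' can succeed; the slack has to be tracked through a stronger inductive hypothesis or the decomposition has to change. Your Kruskal--Katona shortcut is correctly diagnosed as insufficient for the same reason you give: it requires $|\mathcal{B}|\leq\binom{n-2}{k-2}$, which is not available near $n=2k-1$ (and it would also sacrifice the elementary character that is the point of the exercise).

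For comparison, the paper avoids the shadow/complementation picture entirely: it inducts on $n$ rather than on $|\mathcal{B}|$. After shifting $\mathcal{A}$ and $\mathcal{B}$, it splits each family according to whether a set contains the last element $n$, checks that both halves inherit the cross-intersecting and shadow hypotheses, applies the inductive bound to each half (with a separate easy count when $\mathcal{B}(n)=\emptyset$, using that shiftedness puts $\{1,\dots,k\}$ in $\mathcal{B}$), and finishes with Pascal's identity. The element-$n$ split distributes the tightness between the two halves automatically, which is exactly the bookkeeping your member-by-member peeling cannot do. If you want to rescue your route, you would need either an inductive statement strong enough to absorb equality cases like the one above, or to switch to an induction on $n$ in the complemented picture --- at which point you have essentially rederived the paper's proof.
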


Note that the bound of Theorem~\ref{thm:HM} is attained with a single
$k$-element set $B$ that does not contain $1$, together with all
the $k$-element sets that contain $1$ and intersect $B$; the bound
of Theorem~\ref{thm:MainTechnical} is attained with a single $k$-element
set and all $(k-1)$-element sets that intersect it.

Our proof of Theorem~\ref{thm:HM} may be viewed as injective. Other
recent proofs of Theorem~\ref{thm:HM} were given in \cite{Frankl:2019,Hurlbert/Kamat:2018},
but instead of relying on a simple cross-intersecting type theorem,
both of these proofs rely on a certain ``partial complement'' operation.
In somewhat older work \cite{Frankl/Tokushige:1992} (see also \cite{Frankl:2016}),
Frankl and Tokushige gave a proof of Hilton-Milner from a different
cross-intersection theorem, but the proof is less elementary than
that of Theorem~\ref{thm:MainTechnical}, requiring the Schützenberger-Kruskal-Katona
Theorem. A recent preprint of Wu, Li, Feng, Liu and Yu \cite{Wu/Li/Feng/Liu/Yu:2026}
(now published) gives a proof based on still another cross-intersecting
theorem, but the existing proofs of this underlying result also seem
to be somewhat more difficult than our approach.

\subsection*{Shifting}

We recall that a system $\mathcal{F}$ of subsets of $[n]$ is \emph{shifted}
if for each $i<j$, whenever $F\in\mathcal{F}$, $j\in F$ and $i\notin F$,
then also $\left(F\setminus j\right)\cup i\in\mathcal{F}$. Here,
we abuse notation to identify $i,j$ with the singleton subsets $\{i\},\{j\}$
where it causes no confusion. The \emph{(combinatorial)} \emph{shifting
operation} $\shift_{i\leftarrow j}$ is defined as 
\begin{align*}
\shift_{i\leftarrow j}\mathcal{F}= & \left\{ F\in\mathcal{F}:j\notin F\text{ or }i\in F\text{ or }\left(F\setminus j\right)\cup i\in\mathcal{F}\right\} \\
 & \cup\left\{ \left(F\setminus j\right)\cup i:F\in\mathcal{F}\text{ is such that }j\in F,i\notin F\right\} .
\end{align*}

It is well-known that repeated applications of $\shift_{i\leftarrow j}$
over $i<j$ will eventually reduce an arbitrary set system to a shifted
set system, that the operation preserves the cross-intersecting property,
and that $\bdry\shift_{i\leftarrow j}\mathcal{F}\subseteq\shift_{i\leftarrow j}\bdry\mathcal{F}$
\cite{Frankl:1987,Frankl:1991,Gerbner/Patkos:2019,Herzog/Hibi:2011}.

\subsection*{Proof of Theorem~\ref{thm:MainTechnical}}

We carry out a straightforward induction on $n$.
\begin{proof}
If $n=2k-1$, then the upper bound is ${n \choose k-1}$, and the
result follows by noticing that if a $(k-1)$-element set is in $\mathcal{A}$,
then its complement cannot be in $\mathcal{B}$ (and vice-versa).

For the inductive step, we may assume that $\mathcal{A}$ and $\mathcal{B}$
are shifted; otherwise, shift. Let $\mathcal{A}(\neg n),\mathcal{B}(\neg n)$
consist of the subsets in $\mathcal{A},\mathcal{B}$ (respectively)
that do not contain $n$. It is immediate that $\mathcal{A}(\neg n),\mathcal{B}(\neg n)$
are shifted, cross-intersecting, and satisfy the shadow condition.
Let $\mathcal{A}(n),\mathcal{B}(n)$ be obtained by taking the families
consisting of the subsets in $\mathcal{A},\mathcal{B}$ that contain
$n$, then deleting $n$ from each subset. It follows quickly from
definitions that $\mathcal{A}(n),\mathcal{B}(n)$ are shifted, cross-intersecting,
and satisfy the shadow condition. 

As $\mathcal{A}$ and $\mathcal{B}$ are shifted, so $\mathcal{A}(\neg n)$
and $\mathcal{B}(\neg n)$ are nonempty, and hence by induction 
\begin{equation}
\left|\mathcal{A}(\neg n)\right|+\left|\mathcal{B}(\neg n)\right|\leq{n-1 \choose k-1}-{n-1-k \choose k-1}+1.\label{eq:notN}
\end{equation}

For $\mathcal{A}(n),\mathcal{B}(n)$, there are a few easy cases:

If $\mathcal{A}(n)$ is empty, then (by the shadow condition) also
$\mathcal{B}(n)$ is empty.

If $\mathcal{B}(n)$ is empty, then since $\mathcal{B}$ is nonempty
and shifted, we have $\{1,\dots,k\}\in\mathcal{B}$. Since every set
in $\mathcal{A}(n)$ intersects with $\{1,\dots,k\}$, we get $\left|\mathcal{A}(n)\right|+\left|\mathcal{B}(n)\right|=\left|\mathcal{A}(n)\right|\leq{n-1 \choose k-2}-{n-1-k \choose k-2}$.

If $\mathcal{B}(n)$ is nonempty, then by induction it holds that
\begin{align}
\left|\mathcal{A}(n)\right|+\left|\mathcal{B}(n)\right| & \leq{n-1 \choose k-2}-{n-1-(k-1) \choose k-2}+1\nonumber \\
 & \leq{n-1 \choose k-2}-{n-1-k \choose k-2}.\label{eq:BnNonempty}
\end{align}
 The result now follows from (\ref{eq:notN}), the bound on $\left|\mathcal{A}(n)\right|+\left|\mathcal{B}(n)\right|$,
and the Pascal's Triangle identity.
\end{proof}

\subsection*{Proof of Theorem~\ref{thm:HM}}

We will use the following lemma of Frankl and Füredi:
\begin{lem}[essentially Frankl and Füredi \cite{Frankl/Furedi:1986}]
\label{lem:Frankl-Furedi}If $\mathcal{F}$ is a pairwise-intersecting
family of $k$-element subsets of $[n]$ with $\bigcap_{F\in\mathcal{F}}F=\emptyset$,
then there is a shifted family $\mathcal{F}'$ satisfying the same
properties and with $\left|\mathcal{F}'\right|\geq\left|\mathcal{F}\right|$.
\end{lem}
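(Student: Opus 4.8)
The plan is to isolate the one property that shifting can break and then control it with a weight function. Taking $\mathcal A=\mathcal B=\mathcal F$ in the facts recalled above, each $\shift_{i\leftarrow j}$ preserves the intersecting property; it is also a bijection on families and so preserves $|\mathcal F|$. Thus the only one of the three hypotheses that a shift can destroy is $\bigcap_{F\in\mathcal F}F=\emptyset$. I would set $w(\mathcal F)=\sum_{F\in\mathcal F}\sum_{x\in F}x$ and let $\mathcal F'$ be a family of minimum weight among all intersecting families of $k$-subsets of $[n]$ with empty intersection and at least $|\mathcal F|$ members (such a family exists, since $\mathcal F$ is one candidate and $[n]$ admits only finitely many families). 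It suffices to prove that this minimizer $\mathcal F'$ is shifted.

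So suppose $\mathcal F'$ is not shifted, and some $\shift_{i\leftarrow j}$ genuinely changes it. A short check shows that the only element a shift can newly insert into a set is $i$; hence if $\bigcap_{F}F$ passes from empty to nonempty under the shift, the resulting common element must be $i$, and this can happen only if $\{i,j\}$ covers $\mathcal F'$ (so that every set avoiding $i$ contains $j$) and every set containing $j$ but not $i$ is actually moved. Consequently the shift keeps the intersection empty -- it is \emph{safe} -- whenever $\{i,j\}$ is not a cover (some set then avoids both $i$ and $j$, stays fixed, and continues to avoid $i$) or whenever some set with $j\in F$, $i\notin F$ has its image $(F\setminus j)\cup i$ already in $\mathcal F'$ and so survives unshifted. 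Applying a safe changing shift would produce an intersecting, empty-intersection family of the same size but strictly smaller weight, contradicting minimality. Therefore no changing shift is safe: every changing pair $\{i,j\}$ is a cover, which in particular forces the covering number of $\mathcal F'$ to equal $2$.

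Ruling out this last, ``cover-saturated'' configuration is the step I expect to be the main obstacle, and it is essentially the content of the Frankl--F\"uredi result. The idea is to build a competitor of smaller weight by a witness-preserving modification: fixing a changing cover pair $(i,j)$ and a witness set $G_0$ (with $j\in G_0$, $i\notin G_0$, and $(G_0\setminus j)\cup i\notin\mathcal F'$), one shifts every witness except one retained set, lowering the weight while leaving a set that still avoids $i$, so the intersection stays empty. The genuinely delicate point is re-verifying that the shifted witnesses still meet the retained witness; this can fail only when two witnesses meet solely in the element $j$, and that residual case I would dispatch by instead selecting a changing pair $(a,j)$ with $a$ taken outside the retained sets, so that $\{a,j\}$ is not a cover and the earlier safe case applies. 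Once $\mathcal F'$ is shown shifted, it has all the required properties and $|\mathcal F'|\ge|\mathcal F|$, completing the proof.
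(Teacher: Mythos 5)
Your reduction is correct as far as it goes: the weight-minimization setup, the observation that a changing $\shift_{i\leftarrow j}$ can only create the common element $i$, and the conclusion that an unsafe changing pair must be a cover $\{i,j\}$ with every set containing $j$ but not $i$ being a moved witness all check out, and a safe changing shift does contradict minimality. But the argument stops exactly where the work begins: the ``cover-saturated'' case is asserted rather than proved. Concretely, (1) if there is only one witness $G_0$, your partial shift moves nothing and the weight does not decrease, so no contradiction results; (2) in the residual case where every candidate retained witness meets some other witness only in $j$, the proposed fix --- choose a changing pair $(a,j)$ with $a$ outside the retained sets so that $\{a,j\}$ is not a cover --- is unjustified: you need $a<j$, you need a set avoiding \emph{both} $a$ and $j$ (choosing $a$ outside one set does not provide this, since the witnesses all contain $j$), and you need $(a,j)$ to actually be a changing pair, or else the shift does nothing. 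None of these is established, and you yourself flag this step as ``essentially the content of the Frankl--F\"uredi result.'' So there is a genuine gap in the main case.

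The idea you are missing is that the lemma only asks for $\left|\mathcal{F}'\right|\geq\left|\mathcal{F}\right|$, so you are allowed to \emph{add} sets, and this is what makes the paper's proof short. The paper runs the shifting process until some $\shift_{i_{0}\leftarrow j_{0}}$ would create a common element; at that moment every set contains $i_{0}$ or $j_{0}$. Relabelling these as $1$ and $2$ and continuing to shift only on coordinates $\geq3$ forces $\left\{ 1,3,\dots,k+1\right\}$ and $\left\{ 2,3,\dots,k+1\right\}$ into the family; one then adds all $k$-sets containing $\left\{ 1,2\right\}$, which preserves the intersecting property because every set contains $1$ or $2$. The enlarged family contains $\bdry\left\{ 1,\dots,k+1\right\}$, which has empty intersection and is fixed by all further shifts, so the process can be completed without ever regaining a common element. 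If you wish to keep your weight-minimization framework (which would prove the stronger statement with equal cardinality), you must either import this enlargement trick or give a complete treatment of the cover-saturated configuration, including the one-witness and $j$-only-overlap subcases.
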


\begin{proof}
Given the lemma, the proof of Theorem~\ref{thm:HM} is nearly immediate.
Let $\mathcal{F}$ be a shifted family satisfying the conditions of
the theorem. Define 
\begin{alignat*}{2}
\mathcal{A}= & \,\{F\setminus1\,\, & :F\in\mathcal{F}\text{ with }1\in\mathcal{F}\}\\
\mathcal{B}= & \,\{F & :F\in\mathcal{F}\text{ with }1\notin\mathcal{F}\} & .
\end{alignat*}
Since $\mathcal{F}$ is shifted, if $F\in\mathcal{F}$ does not have
$1$, then $\left(F\setminus i\right)\cup1\in\mathcal{F}$ for each
$i\in\mathcal{F}$. It follows that $\bdry\mathcal{B}\subseteq\mathcal{A}$.
Since $\mathcal{F}$ is intersecting, also $\mathcal{A},\mathcal{B}$
are cross-intersecting systems of subsets of $\{2,\dots,n\}$. Since
$\mathcal{F}$ has empty intersection, both of $\mathcal{A},\mathcal{B}$
are nonempty. The desired bound is now immediate from Theorem~\ref{thm:MainTechnical}.
\end{proof}
\begin{rem}
This proof requires only the special case of Theorem~\ref{thm:MainTechnical}
where the set systems are shifted.
\end{rem}

\subsection*{Proof of Lemma~\ref{lem:Frankl-Furedi}}

For completeness, we also prove the lemma.
\begin{proof}
Given $\mathcal{F}$ as in Theorem~\ref{thm:HM}, apply shifting
operations $\shift_{i\leftarrow j}$. Each such operation preserves
the pairwise-intersecting property and cardinality, but may or may
not result in a system with a common element of intersection.

If a sequence of shifting operations ends in a shifted system with
empty intersection, then we are certainly done.

Otherwise, some $\shift_{i_{0}\leftarrow j_{0}}$ results in a system
where every set contains $i_{0}$. Thus, before this step, we have
a system $\mathcal{F}$ where every set contains either $i_{0}$ or
$j_{0}$. Relabel $i_{0}$ to $1$ and $j_{0}$ to $2$, and continue
applying $\shift_{i\leftarrow j}$ operations over all $3\leq i<j$.
Thus, after these additional shift operations, we have $\left\{ 1,3,\dots,k+1\right\} $
and $\left\{ 2,3,\dots,k+1\right\} $ in the system. Without loss
of generality (since every set in $\mathcal{F}$ contains $1$ or
$2$), we also have all $k$-element subsets containing $\left\{ 1,2\right\} $;
otherwise, add them. Thus, we have $\bdry\left\{ 1,\dots,k+1\right\} $
contained in our system. As $\bdry\left\{ 1,\dots,k+1\right\} $ has
empty intersection and is preserved under all further shift operations
(over $1\leq i<j$), the result follows.
\end{proof}

\subsection*{Discussion}

In addition to being short and direct, our proof is relatively elementary,
using only shifting theory. Indeed, we recover a completely elementary
proof of the restriction of the Hilton-Milner Theorem to shifted systems.

A main difficulty in proofs of Hilton-Milner and/or Erd\H{o}s-Ko-Rado
type results is relating systems of $(k-1)$-element subsets to systems
of $k$-element subsets. Our approach handles this with the shadow
containment condition of Theorem~\ref{thm:MainTechnical}.

Our motivation here comes partly from combinatorial algebraic topology.
In particular, the simplicial complex generated by a shifted family
of $k$-element sets has homology with generators in $\mathcal{B}$
(using notation as in the proof of Theorem~\ref{thm:HM}). Thus,
Lemma~\ref{lem:Frankl-Furedi} transforms the combinatorial property
of empty intersection into a homological property. Kalai comments
on similar connections between intersection theorems and homology
in \cite[Section 6.4]{Kalai:2002}.

The approach also gives a unified proof of the well-known Erd\H{o}s-Ko-Rado
Theorem. More concretely, if we relax the hypothesis of Theorem~\ref{thm:MainTechnical}
to allow $\mathcal{B}$ to be empty, then the corresponding bound
is $\left|\mathcal{A}\right|+\left|\mathcal{B}\right|\leq{n-1 \choose k-1}$.
Erd\H{o}s-Ko-Rado now follows from replacing Theorem~\ref{thm:MainTechnical}
with the relaxed cross-intersection theorem in the proof of Theorem~\ref{thm:HM}.
The proof is similar to (and only slightly more complicated than)
the standard inductive proof of Erd\H{o}s-Ko-Rado for shifted systems.

The approach also recovers uniqueness of the largest family for Theorem~\ref{thm:HM}
when $n/2>k\geq4$. Here, we strengthen the hypothesis of Theorem~\ref{thm:MainTechnical}
to require $\mathcal{B}$ to have at least two elements. We discuss
the details in the following section.

\subsection*{Uniqueness of the Hilton-Milner family}

As mentioned in the discussion, the same techniques give uniqueness
of the maximum family in Theorem~\ref{thm:HM}. We prove:
\begin{thm}
\label{thm:StrictHM}In the situation of Theorem~\ref{thm:HM}, if
$4\leq k<n/2$ and $\left|\mathcal{F}\right|$ achieves the upper
bound, then there is some $k$-set $B$ and $i\notin B$ so that $\mathcal{F}$
consists of $B$ together with all $k$-sets that both contain $i$
and intersect $B$.
\end{thm}

We require $k\geq4$ in order to avoid some technicalities. In particular,
there is another family achieving the bound for $k=3$. See \cite{Hurlbert/Kamat:2018}
for more details and a different argument.

As in the proof of Theorem~\ref{thm:HM}, we reduce to a shifted
family, and prove for a shifted family.

The proof for a shifted family requires a completely straightforward
modification of Theorem~\ref{thm:MainTechnical}. We obviously require
$k\geq4.$ We also strengthen the hypothesis to require $\left|\mathcal{B}\right|\geq2$,
replacing the condition that $\left|\mathcal{B}\right|\geq1$; with
the strengthened hypothesis, the inequality is strict. Then in the
proof, we may have $\left|\mathcal{B}(n)\right|$ empty or nonempty.
If empty, then since $\mathcal{B}$ has at least two elements, so
$\mathcal{A}(n)$ is strictly smaller than the given bound. If nonempty,
then the bound in (\ref{eq:BnNonempty}) is already strict so long
as $k\geq4$. In either case, the induction step yields a strict inequality.

Theorem~\ref{thm:StrictHM} follows for shifted families by applying
the variant of Theorem~\ref{thm:MainTechnical} with $\left|\mathcal{B}\right|\geq2$
to the same families as in the proof of Theorem~\ref{thm:HM}.

It remains only to reduce to shifted families. This reduction requires
a bit of care. We did not find the following lemma in the literature,
although we believe it to be known to experts in the field.
\begin{lem}
Let $\mathcal{F}$ be a family of pairwise-intersecting $k$-element
subsets of $[n]$ with the additional property that for any $F_{0}\in\mathcal{F}$,
the intersection $\bigcap_{\mathcal{F}\setminus\{F_{0}\}}F$ is empty.
Then there is a shifted family $\mathcal{F}'$ satisfying the same
properties and with $\left|\mathcal{F}'\right|\geq\left|\mathcal{F}\right|$.
\end{lem}

\begin{proof}[Proof.]
 By the \emph{standard family}, we mean the shifted family with $A=\left\{ 2,\dots,k+1\right\} $,
$A'=\left\{ 2,\dots,k,k+2\right\} $, and all $k$-element sets that
both contain $1$ and intersect $A$ and $A'$. It is obvious that
the standard family is at least as large as any family where all but
two sets contain $1$.

Given $\mathcal{F}$, we perform a sequence of shifts. If these terminate
in a shifted family with the desired properties, then we are done.
Otherwise, an operation results in a family without the additional
property. Stopping just before this operation and relabeling elements,
we have a family containing sets with $1$ and not $2$, with $2$
and not $1$, with both $1$ and $2$, and possibly the set $B=\{3,\dots,k+2\}$.

We may assume without loss of generality that we have all sets containing
both $1,2$ and intersecting with $B$. Since these sets do not have
any common intersection other than $1,2$, the operations $\shift_{i\leftarrow j}$
over all $3\leq i<j$ preserve the additional property.

After shifting over $3\leq i<j$, if we have only one set with $1$
and not $2$, or only one set with $2$ and not $1$, then we replace
with the standard family. Otherwise, we have in the family $\{a,3,\dots,k+1\}$
and $\{a,3,\dots,k,k+2\}$ for $a=1,2$, along with all sets containing
$\{1,2\}$ and intersecting $B$. In particular, the family contains
as subfamilies both $\bdry\left\{ 1,\dots,k+1\right\} $ and $\bdry\left\{ 1,\dots,k,k+2\right\} $.
Both subfamilies have empty intersection and are preserved under all
shift operations, so we can now shift until the system stabilizes.
\end{proof}

\subsection*{Acknowledgements}

We particularly thank Dániel Gerbner for several helpful comments
about preprints of the paper. We also thank Peter Frankl, Balázs Patkós,
John Shareshian, and Tamás Sz\H{o}nyi.

\bibliographystyle{8_Users_russw_Documents_Research_mypapers_A_short_proof_of_the_Hilton-Milner_theorem_hamsplain}
\bibliography{7_Users_russw_Documents_Research_mypapers_A_short_proof_of_the_Hilton-Milner_theorem_Master}

\end{document}